\documentclass[11pt,english]{smfart}

\usepackage{dsfont,amssymb}
\usepackage[T1]{fontenc}
\usepackage{fourier}
\usepackage{baskervald}

\frenchspacing

\setlength{\parskip}{.2ex}

\usepackage[text={6.5in,9in},centering]{geometry}

\usepackage[dvipsnames]{xcolor}   
\usepackage{xparse}
\usepackage{xr-hyper}
\usepackage[linktocpage=true,colorlinks=true,hyperindex,citecolor=teal,linkcolor=blue]{hyperref}

\newcommand{\inv}{^{\raisebox{.2ex}{$\scriptscriptstyle-1$}}}

\newtheorem{thm}{Theorem}[section]  
\newtheorem{prop}[thm]{Proposition}
\newtheorem{lem}[thm]{Lemma}
\newtheorem{cor}[thm]{Corollary}
\theoremstyle{definition}
\newtheorem{rem}[thm]{Remark}
\newtheorem{exms}[thm]{Example}

\newtheorem{defn}[thm]{Definition}

\numberwithin{equation}{section}
 
\author{Amartya Goswami}
\address{[1] Department of Mathematics and Applied Mathematics, University of Johannesburg,  Auckland Park Kingsway Campus, P.O. Box 524, Auckland Park 2006, South Africa; [2] National Institute for Theoretical and Computational Sciences (NITheCS), Johannesburg, South Africa.}

\email{agoswami@uj.ac.za}

\author{Danielle Kleyn}
\address{Department of Mathematical Sciences, Stellenbosch University, Private Bag X1 Matieland, 7602, South Africa}

\email{daniellem.kleyn@gmail.com}

\author{Kerry Porrill}

\address{Department of Mathematical Sciences, Stellenbosch University, Private Bag X1 Matieland, 7602, South Africa}
\email{kerry.porrill@gmail.com}

\title{Some structural aspects of the ring of arithmetical functions: prime ideals and beyond}

\date{}


\subjclass{11A25, 11N64, 11R44}



\keywords{Arithmetic functions, local rings, prime ideal, Krull dimension, Dirichlet convolutions, unique factorization domains}

\begin{document}
\begin{abstract}
The aim of these notes is to study some of the structural aspects of the ring of arithmetical
functions. We prove that this ring is neither Noetherian nor Artinian. Furthermore, we construct various
types of prime ideals. We also give an example of a semi-prime ideal that is not prime. We show that the ring of arithmetical functions has infinite Krull dimension.
\end{abstract}
\maketitle

\section{Introduction}

The study of arithmetical functions is always a central theme of number theory. One of the earliest work in \cite{B15} on arithmetical functions motivated rigorous study of these functions.
A function $f\colon \mathds{N}^+=\mathds{N}\backslash\{0\}\to \mathds{C}$ is called an \emph{arithmetical function}. According to this definition although any sequence of complex numbers is indeed an arithmetical function, however, as in \cite{HW60}, it is mentioned that ``functions $f(n)$ of the positive integer $n$ defined in a manner which expresses some arithmetical property of $n$,'' we expect these functions are very much relevant to number theoretic study. 

Let us recall some examples of arithmetical functions.
 For further studies of these functions, we refer to \cite{A76, C70, S89, M86, SS94, S83}.  Observe that any positive integer $n>1$ can be written as $n=p_1^{a_1}\cdots p_k^{a_k}$, where $p_i$'s are prime and $a_j$'s are positive integers.
 A \emph{M\"{o}bius function} $\mu$ is defined as follows:
\[\mu(n):=\begin{cases}
1, & n=1,\\
(-1)^k, & a_1=\cdots =a_k=1,\\
0, &\text{otherwise}.
\end{cases}\]
The \emph{Euler totient function} $\varphi(n)$ is defined to be the number of
positive integers not exceeding $n$ which are relatively prime to $n$, \emph{i.e.},
\[\varphi(n):=n\prod\limits_{p|n} \left(1 - \dfrac{1}{p}\right).\] \emph{Mangoldt's function} $\Lambda(n)$ is defined by
\[
\Lambda(n):=
\begin{cases}
\log p, &n=p^m\;\text{for some prime}\,p\,\text{and some}\,m\geqslant 1,\\
0,& \text{otherwise.}
\end{cases}
\]
\emph{Liouville's function} $\lambda(n)$ is defined as
\[
\lambda(n):=
\begin{cases}
1, &n=1,\\
(-1)^{a_1+\cdots+ a_k},& \text{otherwise}.
\end{cases}
\]
\emph{Ramanujan's tau function} is defined by 
\[\sum\limits_{n=1}^{\infty} \tau(n) x^n:=x\prod\limits_{j=1}^{\infty} (1-x^j)^{24},\, |x|<1.\]
The \emph{Dedekind psi function} $\psi(n)$ is defined by the formula:
\[
\psi(n):=n\prod\limits_{p|n} \left( 1+ \dfrac{1}{p}\right).
\]
The \emph{prime divisor function} $\Omega(n)$ is defined as the number of prime factors of $n$ counted with multiplicities, \textit{i.e.}, $\Omega(n):=a_1+\cdots+a_k,$
whereas, the \emph{distinct prime divisor function}  $\zeta(n)$ is defined by $\zeta(n):=k.$
A \emph{$p$-adic valuation} $\nu_p$ is defined by $\nu_p:=\max\{k\in \mathds{N}\mid p^k\,\text{divides}\, n\},$ \. where $p$ is a prime integer. The
\emph{Logarithmic function} $\log$ is another example of an arithmetic function. The \emph{Kronecker delta} function is defined for each $m\in \mathds{N}$ as follows:
\begin{equation*}
\delta_m(n) := \begin{cases}
1, & n = m, \\
0, & \text{otherwise}.
\end{cases}
\end{equation*}
Let $\mathcal{A}$ be the set of arithmetical functions. We define addition and multiplication (Dirichlet convolution) on $\mathcal{A}$ respectively by
$
(f+g)(n) := f(n)+g(n)$ and
$(f* g)(n) :=  \sum_{ij=n} f(i)g(j).\label{mul}
$   
We wish to do the structural study of the arithmetic ring in the next section, starting by reviewing some known facts about this ring.

\section{Structures of $\mathcal{A}$}

It is well-known (see  \cite{CE59}) that $(\mathcal{A}, +, *)$ is a commutative ring with identity $e$ defined by
\begin{equation*}\label{identity}
e(n) :=\begin{cases}
1, & n=1,\\ 
0, & \text{otherwise.}  
\end{cases}
\end{equation*}  
The ring $\mathcal{A}=(\mathcal{A}, +, *)$ is called the  \emph{arithmetic ring} or  the \emph{ring of arithmetical functions}. 
It is also easy to prove (see  \cite{A76}) that
an element $f\in \mathcal{A}$ is invertible if and only if $f(1)\neq 0.$ Furthermore, the set of invertible elements of $\mathcal{A}$ has the following structure.

\begin{lem}\label{mg} Let $G :=\{f\in \mathcal{A}\mid f(1)\neq 0 \}$ be the set of units of the arithmetic ring $\mathcal{A}$. Then $(G,*)$ is an abelian group. 
\end{lem}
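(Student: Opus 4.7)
The plan is to show that $G$ is exactly the group of units of the commutative ring $(\mathcal{A},+,*)$, which is automatically an abelian group under $*$. Since the excerpt already recalls the characterization ``$f\in\mathcal{A}$ is invertible if and only if $f(1)\ne 0$'' (proved in \cite{A76}), the statement is essentially a packaging of that fact, and there is no real obstacle — the main task is to verify the group axioms cleanly.

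First I would check closure. For $f,g\in G$, evaluating the Dirichlet convolution at $n=1$ gives $(f*g)(1)=\sum_{ij=1}f(i)g(j)=f(1)g(1)$, which is nonzero as a product of nonzero complex numbers. Hence $f*g\in G$.

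Next, associativity and commutativity of $*$ on $G$ are inherited from the ring $(\mathcal{A},+,*)$. The identity element $e$ of this ring satisfies $e(1)=1\ne 0$, so $e\in G$ and acts as a two-sided identity for $(G,*)$.

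Finally, for the inverse, I would invoke the cited characterization of units: any $f\in G$, having $f(1)\ne 0$, admits an arithmetical function $f^{-1}$ with $f*f^{-1}=e$. Since $(f^{-1}*f)(1)=f^{-1}(1)f(1)=1$ forces $f^{-1}(1)=1/f(1)\ne 0$, the inverse lies in $G$. This closes the argument and shows $(G,*)$ is an abelian group. The only step worth any care is noting that the inverse itself lies back in $G$, but this is immediate from evaluating at $n=1$.
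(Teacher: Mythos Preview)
Your proof is correct and follows essentially the same approach as the paper: both verify closure via $(f*g)(1)=f(1)g(1)\neq 0$, inherit associativity and commutativity from the monoid $(\mathcal{A},*)$, and obtain inverses from the cited criterion $f(1)\neq 0$. Your version is slightly more explicit in checking that $e\in G$ and that $f^{-1}\in G$, but these are minor elaborations of the same argument.
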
 

\begin{proof} 
First we observe that if $f,$ $g\in G,$ then $(f* g)(1)=f(1)g(1)\neq 0$, \textit{i.e.}, the set $G$ is closed under the operation $*$. The rest follows from the fact that ($\mathcal{A}, *$) is a commutative monoid, whereas existence of inverses of an $f$ follows from the hypothesis: $f(1)\neq 0.$
\end{proof}

\begin{exms}\label{innin}
From the invertibility criteria of an element in $\mathcal{A}$, it follows that the arithmetical functions $\Omega,$ $ \zeta,$ $ \Lambda,$ $ \nu_p,$ $ \log$ are non-invertible; whereas  $\mu,$ $ \varphi,$ $ \lambda,$ $\tau,$ $ \psi$ are invertible.
\end{exms}

Recall that an arithmetical function $f$ is called \emph{additive} (\emph{completely additive}) if $f(mn)=f(m)+f(n)$ for coprime positive integers (all positive integers) $m$ and $n$. We observe that if $f$ is additive or completely additive, then $$f(1)=f(1\cdot 1)=f(1)+f(1)=2f(1)$$ implying that $f(1)=0.$ Therefore, we have

\begin{prop}
If $f$ is additive or completely additive, then $f$ is non-invertible.
\end{prop}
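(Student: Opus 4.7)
The plan is to simply combine the preparatory computation performed immediately before the proposition with the invertibility criterion for the ring $\mathcal{A}$ that was recorded at the start of Section~2.

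First, I would invoke the computation already displayed: for $f$ additive (or completely additive), the relation $1\cdot 1 = 1$ with $\gcd(1,1)=1$ forces $f(1)=f(1)+f(1)$, hence $f(1)=0$. Note that for completely additive functions this works verbatim, and for (merely) additive functions it also applies since $1$ is coprime to itself. No case distinction is needed.

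Next, I would cite the criterion stated in the second paragraph of Section~2, namely that an element $f\in\mathcal{A}$ is a unit in $(\mathcal{A},+,\ast)$ if and only if $f(1)\neq 0$ (equivalently, by Lemma~\ref{mg}, $f\in G$ iff $f(1)\neq 0$). Since the first step gives $f(1)=0$, the contrapositive of this criterion yields that $f$ is non-invertible, completing the argument.

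There is no real obstacle here; the proof is essentially a two-line deduction, and the only thing to be careful about is making clear that the conclusion $f(1)=0$ holds in both the additive and completely additive cases, which it does since $\gcd(1,1)=1$ so the additive hypothesis already applies to the product $1\cdot 1$.
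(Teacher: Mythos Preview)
Your proposal is correct and takes essentially the same approach as the paper: the paper also derives $f(1)=0$ from $f(1)=f(1\cdot 1)=2f(1)$ and then invokes the unit criterion $f(1)\neq 0$ to conclude non-invertibility.
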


The converse of the above statement may not be true. For example, the functions $\Lambda$ and $\log$ are non-invertible, but they are not additive (and hence not completely additive). Moreover, it is well-known that

\begin{prop}
The set $\mathrm{Add} (\mathcal{A})$ of additive arithmetical functions forms a group under pointwise addition.
\end{prop}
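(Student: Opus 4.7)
The plan is to verify the four group axioms directly, treating $\mathrm{Add}(\mathcal{A})$ as a subset of the abelian group $(\mathcal{A},+)$ (where $+$ is pointwise addition). Since $(\mathcal{A},+)$ is already an abelian group, it suffices to check that $\mathrm{Add}(\mathcal{A})$ is a subgroup, i.e.\ that it is nonempty, closed under $+$, and closed under additive inverses.

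First I would exhibit the zero function $\mathbf{0}\colon n\mapsto 0$ as an element of $\mathrm{Add}(\mathcal{A})$, which is immediate from $0 = 0 + 0$; this handles nonemptiness and simultaneously supplies the identity. For closure under $+$, I would take $f, g\in \mathrm{Add}(\mathcal{A})$ and coprime positive integers $m,n$, and compute
\begin{equation*}
(f+g)(mn) = f(mn) + g(mn) = \bigl(f(m)+f(n)\bigr) + \bigl(g(m)+g(n)\bigr) = (f+g)(m) + (f+g)(n),
\end{equation*}
using the additivity of $f$ and $g$ together with the commutativity and associativity of addition in $\mathds{C}$. For inverses, given $f\in\mathrm{Add}(\mathcal{A})$ the pointwise negative $-f$ satisfies $(-f)(mn) = -f(mn) = -\bigl(f(m)+f(n)\bigr) = (-f)(m) + (-f)(n)$ for coprime $m,n$, hence $-f\in\mathrm{Add}(\mathcal{A})$.

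Associativity and commutativity of $+$ on $\mathrm{Add}(\mathcal{A})$ are inherited from $(\mathcal{A},+)$, so the above checks complete the proof. There is no genuine obstacle here: the only observation worth highlighting is that the argument uses the coprimality of $m$ and $n$ only to invoke the additivity hypothesis on each of $f$ and $g$ separately, so exactly the same computation shows that the subset $\mathrm{cAdd}(\mathcal{A})$ of completely additive functions is also a subgroup.
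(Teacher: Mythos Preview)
Your proof is correct and complete. The paper itself does not give a proof of this proposition at all: it is introduced with the phrase ``It is well-known that'' and left without argument, so there is nothing to compare against beyond noting that your subgroup verification is exactly the routine check one would expect.
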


\begin{thm}
The arithmetic ring $\mathcal{A}$ is a local ring. 
\end{thm}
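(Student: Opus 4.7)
The plan is to exhibit a unique maximal ideal, using the already-established characterization of units. Set
\[
M := \{f \in \mathcal{A} \mid f(1) = 0\}.
\]
By the invertibility criterion recalled just before Lemma~\ref{mg}, $M$ is precisely the complement of the unit group $G$, i.e.\ the set of non-units of $\mathcal{A}$. So it suffices to verify that $M$ is an ideal: a commutative ring in which the non-units form an ideal is automatically local, with that ideal as its unique maximal ideal.

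First I would check that $M$ is an additive subgroup: if $f(1)=g(1)=0$, then $(f+g)(1)=f(1)+g(1)=0$, and obviously $(-f)(1)=0$, so $M$ is closed under addition and negation. Next I would check absorption under Dirichlet convolution: for any $h \in \mathcal{A}$ and $f \in M$, the only factorization of $1$ as a product of positive integers is $1\cdot 1$, so
\[
(h*f)(1) \;=\; \sum_{ij=1} h(i)f(j) \;=\; h(1)f(1) \;=\; 0,
\]
which places $h*f$ in $M$. Hence $M$ is an ideal.

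Finally, since every proper ideal of a ring consists only of non-units and $M$ is exactly the set of non-units, every proper ideal of $\mathcal{A}$ is contained in $M$. Therefore $M$ is the unique maximal ideal of $\mathcal{A}$, and $\mathcal{A}$ is local.

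There is no real obstacle here; the argument is essentially a one-line consequence of the unit criterion $f(1)\neq 0$ together with the multiplicativity of evaluation at $1$ under Dirichlet convolution. The only step worth emphasizing explicitly is the observation that the sum defining $(h*f)(1)$ collapses to a single term, which is the reason the set of non-units is closed under multiplication.
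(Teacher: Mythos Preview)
Your argument is correct and essentially identical to the paper's: both define $\mathfrak{m}=\{f\in\mathcal{A}\mid f(1)=0\}$ as the complement of the unit group, verify it is an ideal via $(f+f')(1)=0$ and $(h*f)(1)=h(1)f(1)=0$, and invoke the standard fact (the paper cites Bourbaki) that a ring whose non-units form an ideal is local. The only cosmetic difference is that you spell out the ``every proper ideal is contained in $M$'' step explicitly rather than citing a reference.
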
	

\begin{proof}
Let $\mathfrak{m} :=\mathcal{A}\setminus G=\{f\in \mathcal{A}\mid f(1)= 0 \}.$ Since $G$ is the set of units of $\mathcal{A}$ (see Lemma \ref{mg}), by  \cite[Chapter II, \textsection 3, no.1, Proposition 1]{B72}, it is sufficient to show that $\mathfrak{m}$ is an ideal of $\mathcal{A}.$  Suppose $f$, $f' \in \mathfrak{m}$ and $g\in G.$ Then $(f+f')(1)=f(1)+f'(1)=0$ and \[(f*g)(1)=f(1)g(1)=0\cdot g(1)=0.\] Therefore $f+f'$, $f*g\in \mathfrak{m}$.  
\end{proof}

\begin{rem}
Recall that an ideal $I$ of a ring $R$ is called \emph{essential} if $I\cap J\neq \{0\}$, for every non-zero ideal $J$ of $R$. It is easy to see that $\mathfrak{m}$ is indeed an essential ideal of $\mathcal{A}$.
\end{rem}

Next, we wish to prove that  the arithmetic ring $\mathcal{A}$ is not Artinian, and for that we introduce  a norm.
A \emph{norm}  on $\mathcal{A}$ is a map $\omega\colon \mathcal{A} \to \mathds{Z}$ defined by 
\[
\omega(f):=\begin{cases}
0, & f=0;\\
n, & n\;\text{is the least positive integer with}\; f(n)\neq 0.
\end{cases}
\]  

The norm $\omega$ is not a ring homomorphism, however, $\omega$ preserves the (multiplicative) monoid structure of the arithmetic ring $\mathcal{A}$, that is,

\begin{prop}\label{np}
The norm $\omega$ is a (multiplicative) monoid homomorphism.
\end{prop}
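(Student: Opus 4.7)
The plan is to verify the two axioms of a monoid homomorphism: that $\omega$ sends the convolution identity $e$ to $1$, and that $\omega(f*g)=\omega(f)\,\omega(g)$ for all $f,g\in\mathcal{A}$. The identity check is immediate from the definition of $e$: since $e(1)=1\neq 0$, the least positive integer $n$ with $e(n)\neq 0$ is $n=1$, so $\omega(e)=1$.

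For multiplicativity I would first dispose of the degenerate case. If either $f=0$ or $g=0$, then $f*g=0$, so $\omega(f*g)=0$, which equals $\omega(f)\,\omega(g)$ since at least one factor on the right is $0$. This is the reason the convention $\omega(0)=0$ is exactly what is needed for the formula to survive the degenerate case.

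The main case is $f,g\neq 0$. Write $m=\omega(f)$ and $n=\omega(g)$. The goal is to pinpoint the smallest $k$ with $(f*g)(k)\neq 0$ and show that it equals $mn$. For the vanishing, I would take any $k<mn$ and consider an arbitrary factorization $k=ij$ with $i,j\geqslant 1$; if both $i\geqslant m$ and $j\geqslant n$, then $ij\geqslant mn>k$, a contradiction. Hence $i<m$ or $j<n$, which by the minimality in the definition of $\omega$ forces $f(i)=0$ or $g(j)=0$. Every summand in $(f*g)(k)=\sum_{ij=k} f(i)g(j)$ therefore vanishes, giving $(f*g)(k)=0$. For $k=mn$, the same analysis shows that a nonzero term $f(i)g(j)$ can occur only when $i\geqslant m$ and $j\geqslant n$, and combined with $ij=mn$ this forces $i=m,j=n$. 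Thus $(f*g)(mn)=f(m)g(n)$, and this is nonzero because $f(m),g(n)\in\mathds{C}^\times$ and $\mathds{C}$ is an integral domain. It follows that $\omega(f*g)=mn=\omega(f)\,\omega(g)$.

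There is no real obstacle here; the only subtle point is aligning the conventions so that the $0$ case behaves correctly, and conceptually noticing that the argument is exactly the one already used in the proof that $\mathcal{A}$ is not Artinian, where the vanishing $(h*f)(k)=0$ for $k$ below the norm was established by the same factorization trick. The extra content in this proposition is the sharper statement that equality $\omega(f*g)=\omega(f)\omega(g)$ holds, which requires the integrality argument at the single exponent $k=mn$.
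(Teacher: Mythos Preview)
Your proof is correct and follows essentially the same route as the paper's: dispose of the zero case, then use the minimality of $\omega(f)$ and $\omega(g)$ to isolate the single surviving summand $f(\omega(f))\,g(\omega(g))$ in $(f*g)(\omega(f)\omega(g))$. Your write-up is in fact slightly more complete than the paper's, since you explicitly verify the vanishing $(f*g)(k)=0$ for all $k<\omega(f)\omega(g)$, which the paper leaves implicit.
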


\begin{proof}
We observe that $\omega(e)=1.$ We wish to prove
\begin{equation}\label{monhom}
\omega(f* g)=\omega(f) \omega(g), 
\end{equation}
for all $f,$ $g \in \mathcal{A}.$ If $f$ or $g$ is zero, then the condition (\ref{monhom}) is satisfied trivially. Therefore,  assume that $\omega(f)=i$ and $\omega(g)=j$, where $i$ and $j$ are positive integers. If $n=ij$, then
\begin{align*}
(f*g)(n)&=(f*g)(ij)\\&=\sum_{st=n}f(s)g(t)=f(1)g(st)+\cdots + f(i)g(j)+\cdots+f(st)g(1)\\&=f(i)g(j),
\end{align*}
where the last equality follows from the facts: (i) $f(k)=0$ for all $k<i$ and $k$ is a divisor of $n$, (ii) $g(l)=0$ for all $l<j$ and $l$ is a divisor of $n.$ From this, it follows, $\omega(f*g)=ij,$ and hence, we have the claim (\ref{monhom}).
\end{proof}

\begin{thm}
For each positive integer $n,$ the subset $$I_n :=\{ f\in \mathcal{A} \mid \omega(f)\geqslant n\}\cup \{0\}$$ is an ideal of $\mathcal{A}$, and $I_1\supseteq I_2\supseteq I_3\supseteq \cdots$ is an infinite descending chain of ideals. Therefore, $\mathcal{A}$ is not Artinian.
\end{thm}

\begin{proof}
Let $f,$ $g\in I_n.$ If $f = 0$, then $f+g = g \in I_n$, and similarly, so if $g = 0$. If $f \neq 0$ and $g \neq 0$, then $\omega(f)\geqslant n$ and $\omega(g)\geqslant n.$ Since $(f+g)(n)=f(n)+g(n),$ at once we have $\omega(f+g)\geqslant n,$ and hence, $f+g\in I_n$ if $f \neq 0$ and $g \neq 0$.   Now, let $h\in \mathcal{A}.$ To show $h*f\in I_n,$ we consider the following two possibilities: 
\begin{enumerate}
\item $h=0$ or $f=0$: Obviously $h*f=0\in I_n.$

\item $h\neq 0$ and $f \neq 0$:  
Since $f(k)=0$ for all $k\leqslant n-1,$ we also have \[(h*f)(k)=\sum_{ij=k}h(i)f(j)=0,\]
\end{enumerate}
for all $k\leqslant n-1.$ Since $\mathcal{A}$ is an integral domain, this implies there exists $k\geqslant n$ such that $\omega(h*f)=k.$
The second part of the proposition follows from the fact that $\omega(f)\geqslant n$ implies $\omega(f)\geqslant n-1$.
\end{proof}

\begin{rem}
Since  the arithmetic ring $\mathcal{A}$ is an integral domain, it follows that $\mathcal{A}$ does not have any non-trivial idempotent elements. Here is an alternative proof of this fact without using the domain property of $\mathcal{A}$.
Suppose $f$ is a non-trivial idempotent element of the arithmetic ring $\mathcal{A}$. Then we would have that $$\omega(f)^2 = \omega(f^2) = \omega(f)$$ since $f^2 = f$. This implies that $\omega(f) \in \{0, 1\}$. If $\omega(f) = 0$, then $f = 0$ and if $\omega(f) = 1$, then $f$ is a unit. In both cases, we have a contradiction as we assumed that $f$ is non-trivial.
\end{rem}

Recall from \cite{C68}
that an element of an integral domain is called an \emph{atom} or \emph{irreducible} if it is a non-unit which cannot
be written as a product of two non-units. 
If every element of a ring $R$ which is not a
unit or $0$ can be written as a product of atoms,  then $R$ is said to be \emph{atomic}. Since $\mathcal{A}$ is a unique factorization domain (see \cite{CE59}), we have the following obvious result.

\begin{thm}\label{atomic}
The arithmetic ring $\mathcal{A}$ is an atomic domain.
\end{thm}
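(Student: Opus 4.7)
The plan is to derive the theorem directly from the already-cited fact that $\mathcal{A}$ is a unique factorization domain (\cite{CE59}). Recall that the definition of a UFD bundles two conditions: (i) every non-zero non-unit admits a factorization into irreducibles, and (ii) such a factorization is unique up to order and associates. Condition (i) is precisely the statement that the ring is atomic, so the theorem follows immediately once one unpacks the definitions.

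Concretely, I would first state the definition of UFD in the form that makes (i) explicit, then invoke \cite{CE59} to declare that $\mathcal{A}$ satisfies it, and conclude that every non-zero non-unit of $\mathcal{A}$ is a product of atoms, which is exactly the definition of atomic recalled just before the theorem (via \cite{C68}). No further calculation with Dirichlet convolutions is required, because the atomicity question is being outsourced entirely to the UFD property already established in the literature.

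The only potential obstacle is a mismatch of conventions: some authors define a UFD as an atomic domain satisfying a uniqueness condition, while others define it via the principal-prime condition ``every non-zero non-unit is a product of prime elements.'' In the latter formulation, atomicity is not literally the definition, but it is still an immediate consequence, since prime elements in an integral domain are irreducible. I would address this in one sentence to make the logical step watertight, noting that in either convention the factorization half of the UFD definition gives atomicity. With that remark in place, the proof is essentially a one-line invocation of Theorem~\ref{atomic}'s hypothesis.
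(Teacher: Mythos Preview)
Your proposal is correct and matches the paper's own treatment exactly: the paper simply states the theorem as an ``obvious result'' of the fact that $\mathcal{A}$ is a unique factorization domain (citing \cite{CE59}), with no further argument. Your unpacking of the UFD definition and the remark about prime-versus-irreducible conventions only makes explicit what the paper leaves implicit.
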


However, note that atomicity may not be preserved by $\omega$, and this is explained in the next result.

\begin{lem}\label{irrp}
If $f$ is an atom of $\mathcal{A}$, then $\omega(f)$ may not be an atom in $\mathds{Z}$. In fact, for all $c \in \mathds{N}$, there exists an atom $f \in \mathcal{A}$ such that $\omega(f) = c$.
\end{lem}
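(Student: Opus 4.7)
The plan is to prove the stronger ``In fact'' assertion: for every integer $c\ge 2$ I will exhibit a concrete atom $f\in\mathcal{A}$ with $\omega(f)=c$. Choosing $c$ composite then immediately yields the first statement, since composite integers are not atoms of $\mathds{Z}$. (The case $c=1$ is excluded, as $\omega(f)=1$ forces $f(1)\ne 0$, making $f$ a unit by Lemma \ref{mg} rather than an atom.)

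Fix $c\ge 2$ and pick any prime $p>c$. Define $f\in\mathcal{A}$ by $f(c):=1$, $f(p):=1$, and $f(n):=0$ for every other $n$. Then $f(n)=0$ for $n<c$ while $f(c)\ne 0$, so $\omega(f)=c$; in particular $f(1)=0$, so $f$ is a non-zero non-unit. To see $f$ is irreducible, suppose $f=g*h$. By Proposition \ref{np}, $\omega(g)\omega(h)=\omega(f)=c$. If either $\omega(g)$ or $\omega(h)$ equals $1$, the corresponding factor is a unit (again by Lemma \ref{mg}), so the decomposition is trivial; otherwise $\omega(g),\omega(h)\ge 2$, and hence $g(1)=h(1)=0$. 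Since $p$ is prime, its only divisor pairs are $(1,p)$ and $(p,1)$, so
\[
(g*h)(p)=g(1)h(p)+g(p)h(1)=0,
\]
contradicting $f(p)=1$. Thus $f$ is an atom, and specializing to $c=4$ (or any composite $c$) gives the first assertion.

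The only subtlety, and the main obstacle, is that the naive candidate ``$f(c)=1$ and $f(n)=0$ elsewhere'' is insufficient when $c$ is composite: the function concentrated at $4$, for instance, factors as $g*g$ where $g(2)=1$ and $g$ vanishes elsewhere. Inserting the extra value $f(p)=1$ at a prime $p>c$ is the decisive move, because $(g*h)(p)$ is forced to vanish whenever both $g$ and $h$ are non-units, and this single equation simultaneously obstructs every possible nontrivial Dirichlet factorization allowed by the norm constraint $\omega(g)\omega(h)=c$.
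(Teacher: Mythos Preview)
Your proof is correct, and it takes a slightly different route from the paper's. Both arguments rule out a factorization $f=g*h$ with $g,h$ non-units by exhibiting a single value of $n$ at which $(g*h)(n)$ is forced to vanish. The paper splits into cases: for prime $c$ it uses $n=c$ itself (so \emph{every} $f$ with $\omega(f)=c$ is an atom); for composite $c$ it takes any $f$ with $\omega(f)=c$ and $f(c+1)\ne 0$, and shows that for each divisor pair $ij=c+1$ one has either $i<\omega(g)$ or $j<\omega(h)$, using that no divisor $a\ge 2$ of $c$ can divide $c+1$. Your construction instead adjoins a nonzero value at a prime $p>c$, which makes the obstruction immediate and uniform in $c$ via the two-term sum $(g*h)(p)=g(1)h(p)+g(p)h(1)$---this is precisely the mechanism the paper invokes later in Proposition~\ref{prim}. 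Your argument is cleaner and avoids the case split; the paper's version has the mild advantage of showing that a \emph{generic} $f$ with $\omega(f)=c$ (namely any one with $f(c+1)\ne 0$) is already an atom, without having to plant an auxiliary prime value. Your observation that $c=1$ (and $c=0$) must be excluded is well taken; the paper's proof, which treats only prime and composite $c$, likewise silently covers only $c\ge 2$.
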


\begin{proof}
Consider the case when $c$ is a prime number, and let $f \in \mathcal{A}$ be such that $\omega(f) = c$. For contradiction, assume that $f = g * h$, for some $g$, $h \in \mathfrak{m}$. Then, we have $$f(c) = g(1)h(c) + g(c)h(1) = (0)h(c) + g(c)(0) = 0,$$ which is a contradiction. Hence, $f$ is an atom.

Now, let $c$ be a composite integer. Consider $f \in \mathcal{A}$ such that $\omega(f)=c$ and $f(c+1)\neq 0$. Suppose that this $f$ is not an atom, so that there exists $g$, $h \in \mathfrak{m}$ with $f=g*h$. Therefore, $\omega(f)=\omega(g)\omega(h)$. Since $g$ and $h$ are non-zero non-units, it follows that $\omega(g)$, $\omega(h) > 1$. Let $a=\omega(f), b=\omega(h)$. Now consider
\begin{align*}
f(c+1)
&=(g*h)(c+1)\\
&=\sum_{ij=c+1}g(i)h(j)\\ 
&=\sum_{ij=ab+1}g(i)h(j)\\ 
&=\sum_{\substack{ij=ab+1 \\ i < a}}g(i)h(j) + \sum_{\substack{ij=ab+1 \\ i \geqslant a}}g(i)h(j) \\
&=\sum_{\substack{ij=ab+1 \\ i < a}}g(i)h(j) + \sum_{\substack{ij=ab+1 \\ j < b + \frac{1}{a}}}g(i)h(j). 
\end{align*}
Considering $ij=ab+1$, if $i<a$, then $g(i)=0$ (since $\omega(g)\geqslant a$), and so, $g(i)h(j)=0$. It cannot be the case that $i=a$, since $a\nmid ab+1$, and furthermore, if $i>a$, then \[j<\frac{ab+1}{a}=b+ \frac{1}{a}.\] Since $j$ is an integer, this implies that $j\leqslant b$, but since $b \nmid ab+1$, we have $j\neq b$. Hence $j<b$, and since $\omega(h)=b$, we can see that $h(j)=0$ so that $g(i)h(j)=0$. Therefore, $ij=ab+1$ implies that $g(i)h(j)=0$. Hence, \[\sum_{ij=ab+1}g(i)h(j) = 0,\] which is a contradiction with our assumption that $f(ab+1)\neq 0$. Hence, $f$ is an atom.
Therefore, if $\omega(f)$ is an atom, it is not necessarily true that $\omega(f)$ is a prime integer.
\end{proof}

\section{Prime ideals in $\mathcal{A}$}

Our next goal is to give various examples of prime ideals of  $\mathcal{A}$. We start with two obvious ones. Since $\mathcal{A}$ is an integral domain, the zero ideal is a prime ideal of $\mathcal{A}$. Also, the unique maximal ideal $\mathfrak{m}$ of  $\mathcal{A}$ is a prime ideal. To obtain the next example of a prime ideal of $\mathcal{A}$, we recall that an element $p$ of  $\mathcal{A}$ is called \emph{prime} if $p\mid a*b$ implies $p\mid a$ or $p\mid b$, for all $a,$ $b\in \mathcal{A}$.

\begin{prop}\label{prim}
If there exists a prime integer $p$ for which $f(p)\neq0$, then $\langle f \rangle$ is a prime ideal of $\mathcal{A}.$\end{prop}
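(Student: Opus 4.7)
The overall strategy is to reduce the claim to the UFD property of $\mathcal{A}$ established in \cite{CE59} (already invoked in Theorem \ref{atomic}), under which irreducible elements coincide with prime elements, so that $\langle f \rangle$ is a prime ideal whenever $f$ is an atom. I shall read the hypothesis as tacitly including $f \in \mathfrak{m}$; otherwise $f(1) \neq 0$ would force $f$ to be a unit by Lemma \ref{mg}, making $\langle f \rangle = \mathcal{A}$ and the conclusion vacuous.

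To verify atomicity, I would argue by contradiction. Suppose $f = g * h$ with both $g, h \in \mathfrak{m}$, so that $g(1) = h(1) = 0$. Evaluating the Dirichlet convolution at the prime $p$, the only divisor pairs $(i,j)$ with $ij = p$ are $(1,p)$ and $(p,1)$, whence
\[
(g * h)(p) = g(1) h(p) + g(p) h(1) = 0,
\]
which contradicts $f(p) \neq 0$. Therefore no such factorization exists, and $f$ is irreducible in $\mathcal{A}$.

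The final step is to invoke the UFD property: atoms in a UFD are prime elements, hence $f$ is prime and $\langle f \rangle$ is a principal prime ideal. There is no real obstacle; the whole argument rests on the single observation that primality of $p$ kills every mixed term in the convolution sum, leaving only two boundary terms that both vanish because $g, h \in \mathfrak{m}$. The proposition is essentially a natural extension of the prime case already handled in the proof of Lemma \ref{irrp}, and the only subtle point is the implicit assumption $f \in \mathfrak{m}$ needed to make the statement non-vacuous.
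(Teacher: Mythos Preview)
Your proof is correct and follows essentially the same route as the paper: both evaluate the convolution at the prime $p$ to rule out a factorization into two elements of $\mathfrak{m}$, conclude that $f$ is an atom, and then invoke the UFD property of $\mathcal{A}$ to upgrade irreducibility to primality. Your explicit remark that the hypothesis $f\in\mathfrak{m}$ is tacitly needed is a welcome clarification the paper omits, though note that without it the conclusion would be \emph{false} rather than vacuous, since $\langle f\rangle=\mathcal{A}$ is not a prime ideal.
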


\begin{proof}
We see that if $f = g*h$ for some $g,$ $h\in \mathfrak{m}$, then $$f(p) = g(p)h(1) + g(1)h(p) = 0,$$ which is a contradiction. Hence, if $f=g*h$ for some $g$, $h \in \mathcal{A}$, then either $g$ is a unit or $h$ is a unit, and thus, $f$ is an atom. 
Now note that $f$ is a prime element of $\mathcal{A}$ if and only if $\langle f \rangle$ is a prime ideal of $\mathcal{A}$ (see \cite[p. 98]{B70}), and since  $\mathcal{A}$ is a unique factorization domain, $f$ is prime, and hence, we have the desired claim.
\end{proof}
\begin{prop}
Let $P$ be a principal prime ideal in $\mathcal{A}$. Then $P$ is a minimal prime ideal.
\end{prop}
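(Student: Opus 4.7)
The plan is to exploit the fact (cited earlier from Cashwell--Everett) that $\mathcal{A}$ is a unique factorization domain. Write $P = \langle f \rangle$. If $f = 0$, then $P$ is the zero ideal, which is already the minimum among the prime ideals of the domain $\mathcal{A}$, so suppose $f \neq 0$. Since $P$ is a proper prime ideal, $f$ is a non-unit, and the standard fact that in a UFD a nonzero principal ideal is prime precisely when its generator is a prime (equivalently, irreducible) element lets me take $f$ itself to be a prime element of $\mathcal{A}$.

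Next, I would take an arbitrary nonzero prime ideal $Q$ with $Q \subseteq P$ and aim to prove $Q = P$. Pick any nonzero $g \in Q$. Using the UFD structure of $\mathcal{A}$, factor $g = u * p_1 * p_2 * \cdots * p_r$ with $u$ a unit and each $p_i$ a prime element. Primality of $Q$ forces some $p_i \in Q$, and in particular $p_i \in P = \langle f \rangle$, so $f \mid p_i$. Because $p_i$ is irreducible and $f$ is a non-unit, $f$ and $p_i$ must be associates, which yields $f \in \langle p_i \rangle \subseteq Q$. Hence $P = \langle f \rangle \subseteq Q$, and combined with $Q \subseteq P$ this gives $Q = P$.

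This shows that no nonzero prime ideal of $\mathcal{A}$ sits strictly below $P$, which is the content of minimality in this setting (the zero ideal being the unique prime below every nonzero prime of the domain $\mathcal{A}$). The argument is purely formal and uses nothing specific to arithmetical functions beyond the UFD structure already invoked, so I anticipate no serious technical obstacle. The only point that deserves care is the correct reading of ``minimal prime ideal'' in a domain: it must mean minimal among the nonzero primes (equivalently, of height one), since a principal prime such as $P_p = \langle \delta_{p,n} \rangle$ exhibited in Proposition \ref{ppid} is nonzero and properly contains $(0)$.
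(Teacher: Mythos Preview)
Your argument is correct and is the standard height-one characterization of nonzero principal primes in a UFD. The paper, however, takes a different route: rather than invoking a prime factorization of an element of $Q$, it argues by infinite descent using the norm $\omega$. Assuming a prime $Q \subsetneq \langle f \rangle$ and picking $h \in Q$, the paper repeatedly writes $h = g_1 * f$, $g_1 = g_2 * f$, \dots\ (each time using primality of $Q$ together with $f \notin Q$ to force $g_i \in Q$), obtaining $h = g_n * f^n$ for every $n$; multiplicativity of $\omega$ then gives $\omega(f)^n \mid \omega(h)$ for all $n$, impossible since $f$ is a non-unit and hence $\omega(f) \geqslant 2$. Your approach is cleaner and works verbatim in any UFD, while the paper's avoids appealing to the Cashwell--Everett theorem and relies only on the concrete norm on $\mathcal{A}$. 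Your closing remark about the reading of ``minimal prime'' is well taken and applies equally to the paper's argument, which tacitly needs $h \neq 0$.
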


\begin{proof}
Let $P := \langle f \rangle$. Let $Q$ be a prime ideal such that $Q \subsetneq P$. Consider $h \in Q \subsetneq P$. Since $h \in P$, we have that there exists $g_1 \in \mathcal{A}$ such that $h = g_1 * f$. Hence, $g_1 \in Q$ or $f \in Q$ as $Q$ is a prime ideal. If $f \in Q$, then $\langle f \rangle \subseteq Q \subsetneq \langle f \rangle$ which is a contradiction. Thus $g_1 \in Q \subsetneq P$.
Following the previous argument, we then have that $g_1  = g_2 * f$, for some $g_2 \in \mathcal{A}$. Hence, $g_2 \in Q$ or $f \in Q$ as $Q$ is a prime ideal. Since $f \in Q$ is a contradiction, we have that $g_2 \in Q$.
Repeating these steps, we  arrive at the following equations for some $g_1$, $g_2$, $g_3$, $\ldots\in \mathcal{A}$:
\begin{align*}
h &= g_1 * f, \\
g_1  &= g_2 * f, \\
g_2 &= g_3 * f ,\\
g_3 &= g_4 * f, \\
&\vdots
\end{align*}
and hence, we have
\begin{align*}
h &= g_1 * f \\
&= g_2 * f^2 \\
&= g_3 *f^3 \\
&= g_4 * f^4\\
&\vdots
\end{align*}
and, in general, we have $h = g_n * f^n$, for any $n \in \mathds{N}$. Hence, we must have $\omega(h) = \omega(g_n) * \omega(f)^n$ for every $n \in \mathds{N}$. This implies that $\omega(h)=0$. Hence, $h=0$, and $Q$ is the zero ideal. Thus, $P$ is a minimal prime ideal.
\end{proof}

We now introduce two types of subsets of $\mathcal{A}$, namely $P_m$ and $J_Q$, and prove that these are prime ideals. For the rest of the paper, we shall denote the set of all prime integers by $\mathcal{P}$.

\begin{defn} \label{P_m}
Let $m \in \mathds{N}$. We define \[P_m := \left\{f \in \mathcal{A} \mid f(n) = 0 \mbox{ if } \gcd(m, n) = 1\right\}.\]
\end{defn}

\begin{rem}
We note that $P_0 = \mathfrak{m}$ and $P_1 = \{0\}$.
\end{rem}

\begin{defn}
Let $Q$ be a set of prime integers and define $\Pi_Q$ to be the set of products of a finite number of elements from $Q$. That is, \[\Pi_Q := \left\{q_1^{a_1}q_2^{a_2}\cdots q_t^{a_t} \mid t\in \mathds{N},\; q_1, q_2, \ldots, q_t \in Q,\; a_1, a_2, \ldots, a_t \in \mathds{N}\right\}.\]
We define \[J_Q := \left\{f \in \mathcal{A} \mid f(n) = 0 \text{ if } n \in \Pi_Q \right\} .\]
\end{defn}

\begin{rem}
We note that $J_{\emptyset} = \mathfrak{m}$ and $J_{\mathcal{P}} = \{0\}$.
\end{rem}

\begin{cor} \label{P_m}
Let $m \in \mathds{N}$. Then $P_m$ is an ideal.
\end{cor}
\begin{proof}
Let $f$, $ g \in P_m$. Then for any $n \in \mathds{N}$ such that $gcd(m, n) = 1$ we see that \[(f+g)(n) = f(n) + g(n) = 0,\] and hence, $f+g \in P_m$. If we have $h \in \mathcal{A}$, then we note that \[(h*f)(n) = \sum_{ij=n}h(i)f(j) = \sum_{\substack{ij=n \\ gcd(j, m) = 1}}h(i)f(j) = 0,\] and hence, $f*g \in P_m$. Thus, $P_m$ is an ideal.
\end{proof}

\begin{cor} \label{Jq-prime}
Let $Q$ be a set of prime integers. Then $J_Q$ is an ideal. 
\end{cor}
\begin{proof}
A noteworthy remark that will be useful is that if $n \in \Pi_Q$ and $k \mid n$, then $k \in \Pi_Q$. We also have that if $n_1, n_2 \in \Pi_Q$, then $n_1n_2 \in \Pi_Q$.

Let $f$, $g \in J_Q$. Then, for any $n \in \Pi_Q$, we see that $(f+g)(n) = f(n) + g(n) = 0$, and hence, $f+g \in J_Q$. If we have $h \in \mathcal{A}$, then we note that for $n \in \Pi_Q$ we have \[(h*f)(n) = \sum_{ij = n}h(i)f(j) = \sum_{ij = n}h(i)(0) = 0,\] since if $j \mid n$, then $j \in \Pi_Q$. Hence $f*g \in J_Q$ and $J_Q$ is an ideal.
\end{proof}

\begin{prop}\label{pm-sum}
Let $m\in\mathds{N}$ such that $m
\geqslant 2$, and let $q_1,\dots,q_k$ be all the prime divisors of $m$. Then $P_m=P_{q_1}+\cdots +P_{q_k}$.
\end{prop}

\begin{proof}
We start with the observation that if $m_1 \mid m_2$, then $P_{m_1} \subseteq P_{m_2}$. To prove this, consider $f \in P_{m_1}$ and consider $n \in \mathds{N}$ such that $\gcd(m_2, n) = 1$. Then, we see that $\gcd(m_1, n) \mid \gcd(m_2, n)$ so that $\gcd(m_1, n) = 1$. Hence, $f(n) = 0$ for all such $n$ so that $f \in P_{m_2}$. 
Since $q_i\mid m$ for all $i = 1, 2, \ldots, k$, we have $P_{q_i} \subseteq P_m$ for all $i = 1, 2, \ldots, k$. Hence we can conclude $P_m\supseteq P_{q_1}+\cdots +P_{q_k}$.

Now to prove $P_m \subseteq P_{q_1} + P_{q_2} + \cdots + P_{q_k}$, we apply induction on $k$. If $k=1$, then $m=q_1^{a_1}$ for some $a_1 \in \mathds{N}$. Consider $f\in P_m$ and $n \in \mathds{N}$ such that $\gcd(q_1, n)=1$. This implies that $q_1 \nmid n$ and we can also conclude that $\gcd(q_1^{a_1}, n) = 1$ so that $f(n) = 0$. Since $f(n) = 0$ for all such $n$, we see that $f \in P_{q_1}$, and hence $P_{m} \subseteq P_{q_1}$.

Now suppose for some $1\leqslant i$, we have that if $m$ has $i$ distinct prime divisors $q_1,\dots q_i$, then $P_m\subseteq P_{q_1}+\cdots +P_{q_i}$. Now, suppose $m$ has $i+1$ prime divisors. Let $q_{i+1}$ be a prime dividing $m$. We can write $m=q_{i+1}^bm^\prime$, where $b\geqslant 1$ and $\gcd(q_{i+1},m^\prime)=1$. Let $f\in P_m$. Define the following:
 \[
f_1(n) =
\begin{cases}
f(n), & q_{i+1} \nmid n, \\
0, & \text{otherwise;}
\end{cases}
\qquad \qquad
f_2(n) =
\begin{cases}
f(n), & q_{i+1} \mid n, \\
0, & \text{otherwise.}
\end{cases}
\]
Consider $n \in \mathds{N}$ such that $\gcd(m^\prime,n) = 1$. Then either $\gcd(m,n)=1$ or $\gcd(n,m)=q_{i+1}^c$ for some $c\in \mathds{N}$. If $\gcd(m, n) = 1$, then $f(n)=0$ so that $f_1(n)=0$. Alternatively, if $\gcd(n,m)=q_{i+1}^c$ for some $c\in \mathds{N}$, then we can conclude that $q_{i+1}\mid n$. Hence, $f_1(n)=0$ by definition of $f_1$. Now we have that for all $n\in \mathds{N}$, if $\gcd(m^\prime,n)=1$, then $f_1(n)=0$. We can conclude that $f_1\in P_{m^\prime}$.
Now, consider $f_2$.  Notice that for all $n\in\mathds{N}$, if $\gcd(q_{i+1},n)=1$ then $q_{i+1}\nmid n$ so that $f_2(n)=0$. Thus, $f_2\in P_{q_{i+1}}$. Now note that \[f_1(n)+f_2(n)=\begin{cases}
f(n), & q_{i+1} \mid n, \\
f(n), & \text{otherwise},
\end{cases}\] so that $f_1+f_2=f$. Hence, we can conclude that $P_m\subseteq P_{m^\prime}+P_{q_{i+1}}$. By the induction hypothesis, we have that $P_m\subseteq P_{q_1}+\cdots +P_{q_i}+P_{q_{i+1}}$, where $q_1,\dots, q_i$ are the prime factors of $m^\prime$. Thus, by induction, we have that $P_m \subseteq P_{q_1} + P_{q_2} + \cdots + P_{q_k}$. Now we can conclude that $P_m = P_{q_1} + P_{q_2} + \cdots + P_{q_k}$.
\end{proof}

\begin{cor}\label{Pm-JQ}
Let $Q$ be a set of prime integers. If $\mathcal{P} \backslash Q$ is finite, then $J_Q = P_m$, for some $m \in \mathds{N}$. Specifically, if $\mathcal{P} \backslash Q = \{q_1, q_2, \ldots, q_k\}$ for some $k\in \mathds{N}$, then $m = q_1q_2\cdots q_k$. 
\end{cor}

\begin{proof}
Consider $f \in J_Q$ and $n \in \mathds{N}$ such that $\gcd(m, n) = 1$. This means that $q_i \nmid n$ for all $i \in \{1, 2, \ldots, k\}$. Therefore, if $n$ has a prime factorisation of $n = r_1^{a_1}r_2^{a_2}\cdots r_t^{a_t}$ for some $t \in \mathds{N}$, then $r_i \notin \{q_1, q_2, \ldots, q_k\} = \mathcal{P} \backslash Q$ for all $i \in \{1, 2, \ldots, t\}$. Hence, $r_i \in Q$ for all $i \in \{1, 2, \ldots, t\}$ so that $n \in \Pi_Q$. Now, since $f \in J_Q$, we can see that, $f(n) = 0$, which lets us conclude that $f \in P_m$. Hence, $J_Q \subseteq P_m$.
Now consider $f \in P_m$ and $n \in \Pi_Q$. Now $\gcd(m, n) = \gcd(q_1q_2\cdots q_k, n) \neq 1$ if and only if $q_i \nmid n$ for all $i\in \{1, 2, \ldots, k\}$. If $q_i \mid n$ for some $i \in \{1, 2, \ldots, k\}$, then $n \notin \Pi_Q$ by definition, which is a contradiction. Hence $\gcd(m, n) = 1$ and $f(n) = 0$, so that, $f \in J_Q$. Hence, $P_m \subseteq J_Q$ so that $J_Q = P_m$.
\end{proof}

\begin{cor} \label{same prime ideal}
Let $m_1, m_2 \in \mathds{N}$. Then $P_{m_1} = P_{m_2}$ if and only if $m_1$ and $m_2$ share all of their prime divisors.
\end{cor}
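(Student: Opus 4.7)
The plan is to establish both directions using the generating set from Proposition \ref{P_m generators} and simple observations about when $\delta_{p,n}$ lies in $P_m$.

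For the backward direction, suppose $m_1$ and $m_2$ have the same set of prime divisors. Note that the condition $\gcd(m,n)=1$ depends only on whether any prime dividing $m$ also divides $n$; it does not depend on the exponents in the factorisation of $m$. Therefore $\gcd(m_1,n)=1$ iff $\gcd(m_2,n)=1$ for every $n$, so directly from the definition of $P_m$ we obtain $P_{m_1}=P_{m_2}$. Alternatively, one can cite Proposition \ref{P_m generators}: both $P_{m_1}$ and $P_{m_2}$ are generated by $\langle \delta_{q_1,n},\ldots,\delta_{q_k,n}\rangle$ where $q_1,\ldots,q_k$ is the common list of prime divisors.

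For the forward direction, I argue contrapositively: suppose $m_1$ and $m_2$ do not share all prime divisors, and without loss of generality there is a prime $p$ with $p\mid m_1$ but $p\nmid m_2$. The key observation is that $\delta_{p,n}\in P_m$ if and only if $p\mid m$, since $\delta_{p,n}$ vanishes off $p$ and takes the value $1$ at $p$, so membership in $P_m$ reduces to requiring $\gcd(m,p)\ne 1$. Consequently $\delta_{p,n}\in P_{m_1}\setminus P_{m_2}$, which gives $P_{m_1}\ne P_{m_2}$.

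Neither direction presents a real obstacle; the only care required is in the small lemma that $\delta_{p,n}\in P_m\iff p\mid m$, which follows immediately from the definition of $P_m$ and the definition of the Kronecker delta used throughout the paper. I would present both directions as short paragraphs, explicitly invoking Proposition \ref{P_m generators} (or the definition of $P_m$) in each case.
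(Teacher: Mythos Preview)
Your proof is correct and follows essentially the same approach as the paper: the backward direction via Proposition~\ref{P_m generators} (you also add the direct gcd observation, which is fine), and the forward direction by contraposition using the witness $\delta_{p,n}$ for a prime $p$ dividing one of the $m_i$ but not the other. There is nothing substantively different to comment on.
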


\begin{proof}
We see that if $m_1$ and $m_2$ share all of their prime divisors, then $P_{m_1} = P_{m_2}$ using Proposition \ref{pm-sum}.
Now say $P_{m_1} = P_{m_2}$, and for contradiction assume that $m_1$ and $m_2$ do not share all of their prime divisors. Let $r$ be a prime such that $r \mid m_1$ and $r \nmid m_2$. Now consider $\delta_r \in \mathcal{A}$. Then $\delta_r \in P_{m_1}$ and $\delta_r \notin P_{m_2}$ as we will have $\gcd(n, m_1) = 0$ for all $n \in \mathds{N}\backslash\{r\}$ (with $\gcd(r,m_1) = r \neq 1$) and $\gcd(r, m_2) = 1$ but $\gcd(r, m_2) = 1 \neq 0$. This is a contradiction, as we assumed $P_{m_1} = P_{m_2}$. Thus, $m_1$ and $m_2$ must share all of their prime divisors.
\end{proof}

\begin{prop} \label{Jq-prime}
Let $Q$ be a set of prime integers. Then $J_Q$ is a prime ideal. 
\end{prop}

\begin{proof}
To show that $J_Q$ is a prime ideal, consider $f^\prime$ and $g^\prime \in \mathcal{A} \backslash J_Q$. Since $f^\prime \notin J_Q$, there exists a smallest $n_1 \in \Pi_Q$ such that $f^\prime(n_1) \neq 0$. Similarly, there exists a smallest $n_2 \in \mathds{N}$ such that $g^\prime(n_2) \neq 0$. We can thus say that 
\begin{align*}
(f^\prime * g^\prime)(n_1n_2)
& = \sum_{ij=n_1n_2}f^\prime(i)g^\prime(j) \\
& = \sum_{\substack{ij=n_1n_2 \\ i \leqslant n_1}}f^\prime(i)g^\prime(j) +  \sum_{\substack{ij=n_1n_2 \\ i > n_1}}f^\prime(i)g^\prime(j) \\
& = \sum_{\substack{ij=n_1n_2 \\ i \leqslant n_1}}f^\prime(i)g^\prime(j) +  \sum_{\substack{ij=n_1n_2 \\ j < n_2}}f^\prime(i)g^\prime(j) \\
& = f^\prime(n_1)g^\prime(n_2) + 0 \\
& = f^\prime(n_1)g^\prime(n_2)\\
& \neq 0,
\end{align*}
which implies that $f^\prime * g^\prime \notin J_Q$. Thus, by the contrapositive argument, $J_Q$ is a prime ideal as we proved equivalently that $f*g \in J_Q$ must imply that $f \in J_Q$ or $g \in J_Q$.
\end{proof}

\begin{cor}
Let $m \in \mathds{N}$. Then $P_m$ is a prime ideal.
\end{cor}
\begin{proof}
We have shown in Corollary \ref{Pm-JQ} that $P_m=J_Q$ for the correct choice of $Q$. Hence, by Proposition \ref{Jq-prime}, we see that $P_m$ is prime.  
\end{proof}

\begin{prop}\label{ppid} 
Let $p$ be a prime integer. Then $P_p$ is a principal ideal. Specifically $P_p=\langle \delta_{p} \rangle$.
\end{prop}

\begin{proof}
Note that if $p$ is prime, then $\gcd(n,p) = 1$ is equivalent to $p \nmid n$. Hence \[P_p := \left\{f \in \mathcal{A} \mid f(n) = 0 \mbox{ if } p\nmid n \right\}.\]  
Let $f \in P_p$. Define $g$ as follows, $g(n):=f(np)$. Since $f \in P_p$, $f(n)=0$ when $p \nmid n$ and by definition of $g$ we have $f(n)=g\left(\frac{n}{p}\right)$ if $p\mid n$. Then, we have that \begin{align*}
(\delta_p*g)(n) 
&= \sum_{ij=n}\delta_p(i)g(j) \\
&= \sum_{\substack{ij=n \\ i = p}}g(j) \\
&=\begin{cases}
0, & p\nmid n;\\
g\left(\frac{n}{p}\right), & \text{otherwise}
\end{cases}\\
&=f(n). 
\end{align*}
Hence, $f \in \langle \delta_p \rangle$ so that $P_p \subseteq \langle \delta_p \rangle$.
Now we note that $\delta_p \in P_p$, which can easily be checked by definition of $P_p$. Hence, $\langle \delta_p \rangle \subseteq P_p$ so that $\langle \delta_{p}\rangle=P_p$, and $P_p$ are principal ideals.
\end{proof}
\begin{cor} \label{P_m generators}
Let $m\in\mathds{N}$ such that $m
\geqslant 2$, and let $q_1,\dots,q_k$ be all the prime divisors of $m$. Then $P_m = \langle \delta_{q_1}, \dots, \delta_{q_k} \rangle$.
\end{cor}

\begin{proof}
By Proposition \ref{pm-sum}, $P_m=P_{q_1}+\cdots +P_{q_k}$. Furthermore, for each $1\leqslant i\leqslant k$, we have by Proposition \ref{ppid} that $P_{q_i}=\langle \delta_{q_i}\rangle$. Thus, we can conclude that $P_m=\langle \delta_{q_1}, \dots, \delta_{q_k} \rangle$.
\end{proof}

\begin{lem} \label{ideal_num_generators}
Let $I$ be a proper ideal of $\mathcal{A}$ and let $k \geqslant 1$. Assume that there exist distinct primes
$q_1, \ldots, q_k$ such that the map
\begin{align*}
\varphi\colon & I\to \mathds{C}^k\\
&f\mapsto [f(q_1), \ldots, f(q_k)]^T
\end{align*}
is surjective. Then, $I$ requires at least $k$ generators.
\end{lem}

\begin{proof}
Assume for contradiction that $I$ is generated by $k^*$ elements, say $I = \langle f_1, f_2, \ldots, f_{k^*}\rangle$, where $k^* < k$. Let $f$ be an arbitrary element of $I$. Then $$f = f_1*g_1 + f_2*g_2 + \cdots + f_{k^*}*g_{k^*},$$ for some $g_1$, $g_2$, $\ldots$, $g_{k^*} \in \mathcal{A}$, and for all $r \in \{1, 2, \ldots, k\}$, we have
\begin{align*}
f(q_r) &= \sum\limits_{i=1}^n \left(f_i(q_r)g_i(1) + f_i(1)g_i(q_r)\right) \\
&= \sum\limits_{i=1}^n \left(f_i(q_r)g_i(1) + 0\cdot g_i(q_r) \right)\\
&= \sum\limits_{i=1}^n f_i(q_r)g_i(1).
\end{align*}
Now, the map $\varphi: I \to \mathds{C}^k$ can be rewritten as 
\begin{align*}
\varphi(f) :=
\begin{bmatrix}
f(q_1) \\
f(q_2) \\
\vdots \\
f(q_k)
\end{bmatrix}
&= g_1(1)
\begin{bmatrix}
f_1(q_1) \\
f_1(q_2) \\
\vdots \\
f_1(q_k)
\end{bmatrix}
+ g_2(1)
\begin{bmatrix}
f_2(q_1) \\
f_2(q_2) \\
\vdots \\
f_2(q_k)
\end{bmatrix}
+ \cdots +
g_{k^*}(1)
\begin{bmatrix}
f_{k^*}(q_1) \\
f_{k^*}(q_2) \\
\vdots \\
f_{k^*}(q_k)
\end{bmatrix}.
\end{align*}
We then see that $\left\{\varphi(f)\mid f \in I\right\}$ is at most $k^*$-dimensional over $\mathds{C}$ since every $\varphi(f)$ can be written as a linear combination of the $n$ vectors \[\left[f_1(q_1), f_1(q_2), \ldots, f_1(q_k)\right]^T, \ldots, \left[f_{k^*}(q_1), f_{k^*}(q_2), \ldots, f_{k^*}(q_k)\right]^T.\] Since $\left\{\varphi(f)\mid f\in I\right\} = \mathds{C}^{k}$, we will have that $\mathds{C}^{k}$ is at most $k^*$-dimensional over $\mathds{C}$. However, this is a contradiction since $\mathds{C}^{k}$ is $k$-dimensional over $\mathds{C}$ and $k^* < k$.
Hence, $I$ cannot be generated by fewer than $k$ elements.
\end{proof}

\begin{prop} \label{P_m num_generators}
Let $m\in\mathds{N}$ such that $m \geqslant 2$, and let $q_1,\dots,q_k$ be the distinct prime divisors of $m$. Then $P_m$ has exactly $k$ generators.
\end{prop}

\begin{proof}
Using Corollary \ref{P_m generators}, we already know that $P_m = \langle \delta_{q_1}, \ldots, \delta_{q_k}\rangle$. 
To see that $P_m$ cannot have fewer than $k$ generators, we note that the map 
\begin{align*}
\varphi\colon\, & P_m\to \mathds{C}^k\\
&f\mapsto [f(q_1), \ldots, f(q_k)]^T
\end{align*}
is surjective since no restriction is defined on these values in the definition of $P_m$. Using Lemma \ref{ideal_num_generators}, we then know that $P_m$ cannot be generated by fewer than $k$ elements.
\end{proof}
\begin{cor}\label{JQ_infinitely_generated}
Let $Q$ be a set of prime integers. If $\mathcal{P} \backslash Q$ is an infinite set, then $J_Q$ is not finitely generated.
\end{cor}
\begin{proof}
Let us assume that $J_Q$ is finitely generated. Say $J_Q = \langle f_1, f_2, \ldots, f_n \rangle$ for some $f_1$, $f_2$, $\ldots$, $f_n \in \mathcal{A}$. Select $p_1$, $p_2$, $\ldots$, $p_{n+1} \in \mathcal{P} \backslash Q$ such that $p_1 < p_2 < \cdots < p_{n+1}$. Note, this is possible to do since we are assuming that $\mathcal{P} \backslash Q$ is infinite.
Now we can define a map $\varphi: J_Q \to \mathds{C}^{n+1}$ as follows
\begin{align*}
\varphi\colon & J_Q\to \mathds{C}^{n+1}\\
&f\mapsto 
 \left[f(p_1), \ldots, f(p_{n+1})\right]^T.
\end{align*}
In fact, $\varphi$ is surjective since $J_Q$ does not put any condition on the value of $f(p)$ for every prime $p \in \mathcal{P} \backslash Q$ and $f \in J_Q$. By Lemma \ref{ideal_num_generators}, this means that $J_Q$ cannot be generated by fewer than $n+1$ generators, which is a contradiction by our assumption that $J_Q$ is generated by $n$ generators.
Hence, $J_Q$ cannot be finitely generated if $\mathcal{P} \backslash Q$ is infinite.
\end{proof}

\begin{cor}
$\mathcal{A}$ is not a B\'ezout domain.
\end{cor}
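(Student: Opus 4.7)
The plan is to produce a finitely generated ideal of $\mathcal{A}$ that fails to be principal, since a B\'ezout domain is by definition an integral domain in which every finitely generated ideal is principal. All the work has already been done in Proposition \ref{P_m generators}: it tells us precisely how many generators the ideal $P_m$ needs, so as soon as $m$ has at least two distinct prime factors, $P_m$ is finitely generated but not principal.

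Concretely, I would take $m = pq$ for two distinct prime integers $p, q$ (for example $m = 6$ with $p=2$, $q=3$). By Proposition \ref{P_m generators}, the ideal $P_m$ is finitely generated, namely $P_m = \langle \delta_{p,n}, \delta_{q,n}\rangle$, and moreover cannot be generated by fewer than $2$ elements. In particular $P_m$ cannot be generated by a single element, so $P_m$ is not principal.

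Since $\mathcal{A}$ contains a finitely generated non-principal ideal, $\mathcal{A}$ is not a B\'ezout domain. There is essentially no obstacle here because Proposition \ref{P_m generators} already contains the minimality-of-generators argument (the surjection $\varphi \colon P_m \to \mathds{C}^k$ and the dimension count); the corollary is just a one-line consequence.
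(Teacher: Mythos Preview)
Your proof is correct and is essentially identical to the paper's own argument: the paper also takes $P_6=\langle \delta_{2,n},\delta_{3,n}\rangle$ and invokes Proposition~\ref{P_m generators} to conclude it cannot be generated by a single element.
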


\begin{proof}
Consider $P_6$. This is generated by $\delta_{2}$, and $\delta_{3}$ and cannot be generated by $1$ element. Therefore, not every finitely generated ideal is principal.
\end{proof}

\begin{cor} \label{prime_subsets}
Let $m \in\mathds{N}$ and let $Q$ be a set of prime integers such that $r \nmid m$ for all $r \in Q$. Consider primes $p$, $q$ such that $p \mid m$ and $q \in Q$. Then \[P_p \subseteq P_m \subseteq J_Q \subseteq J_q,\] where $J_q = J_{\{q\}}$.
\end{cor}

\begin{proof}
Using Proposition \ref{pm-sum}, we see that $P_p \subseteq P_m$.
Consider $f \in P_m$ such that $f(n) = 0$ if $\gcd(m, n)=1$. Now consider $n \in \Pi_Q$. If $r \mid n$ for some prime integer $r$, then $r \in \Pi_Q$. However, $r \nmid m$ for all $r \nmid Q$ so that $\gcd(m, n) = 1$, and hence, $f(n) = 0$ for all $n \in \Pi_Q$. Hence, $f \in J_Q$ so that $P_m \subseteq J_Q$.
Let $f \in J_Q$. Consider $n \in \Pi_{\{q\}} \subseteq \Pi_Q$. Then $f(n) = 0$ since $n \in \Pi_Q$. Hence, $f \in J_q$ so that $J_Q \subseteq J_q$. 
\end{proof}

\begin{prop}\label{nnd}
Let $\pi_k$ denote the $k^{th}$ prime in $\mathds{N}$, where $\pi_1 = 2$. For each positive integer $n,$ the subset $$K_n :=\left\{ f\in \mathcal{A} \mid f(1) = 0 \mbox{ and } f(\pi_k) = 0 \mbox{ for all } k \geqslant  n \right\}$$ is a (non-prime) ideal of $\mathcal{A}$, and $K_1\subseteq K_2\subseteq K_3\subseteq \cdots$ is an infinite ascending chain of ideals. Therefore, $\mathcal{A}$ is not Noetherian.
\end{prop}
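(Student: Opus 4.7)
The plan is to verify four things in order: first, that each $K_n$ is closed under addition and absorbs Dirichlet convolution by arbitrary elements of $\mathcal{A}$; second, that the inclusions $K_n \subseteq K_{n+1}$ are strict; third, that each $K_n$ fails to be prime; fourth, that the resulting strictly increasing infinite chain of ideals directly contradicts the Noetherian property.

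For the ideal axioms, additive closure is automatic since the defining conditions $f(1)=0$ and $f(\pi_k)=0$ for $k\geqslant n$ are linear. Given $f\in K_n$ and arbitrary $h\in\mathcal{A}$, I would compute $(h*f)(1)=h(1)f(1)=0$ and then exploit the fact that the only divisors of a prime $\pi_k$ are $1$ and $\pi_k$ itself, so that
\[
(h*f)(\pi_k)=h(1)f(\pi_k)+h(\pi_k)f(1)=0
\]
for every $k\geqslant n$. This is the one step where something number-theoretic really enters: it genuinely uses that each $\pi_k$ is prime.

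For both strictness and non-primality, I would use, for each $n$, the Kronecker-type function $\delta_{\pi_n,m}$ in the notation of Proposition \ref{ppid} (equal to $1$ at $\pi_n$ and $0$ elsewhere). It lies in $K_{n+1}$ because its only nonzero value is at $\pi_n$ and $n\notin\{k:k\geqslant n+1\}$, but it fails to lie in $K_n$ since $\pi_n$ does appear among the primes $\pi_k$ with $k\geqslant n$; this gives $K_n\subsetneq K_{n+1}$, so the chain is genuinely infinite and the Noetherian conclusion follows at once. For non-primality, a one-line computation shows that $\delta_{\pi_n,m}*\delta_{\pi_n,m}$ is supported only at $\pi_n^2$; since $\pi_n^2$ is never a prime, this product vanishes at $1$ and at every $\pi_k$, hence lies in $K_n$, while $\delta_{\pi_n,m}\notin K_n$. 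Thus $K_n$ is not prime.

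I do not anticipate any serious obstacle. The whole argument rests on two elementary observations (a prime has only trivial divisors, and the square of a prime is never a prime), and the only care required is bookkeeping about which indices $k$ sit in $\{k:k\geqslant n\}$ versus $\{k:k\geqslant n+1\}$ when checking membership of $\delta_{\pi_n,m}$ in the relevant $K_j$'s.
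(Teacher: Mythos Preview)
Your argument is correct and follows the same outline as the paper's proof: the verification of the ideal axioms is identical, resting on the observation that a prime $\pi_k$ has only the divisors $1$ and $\pi_k$. The details differ in two minor but useful ways. For non-primality, the paper uses the single function $f=(0,1,1,1,\ldots)$ and notes that $(f*f)(p)=f(1)f(p)+f(p)f(1)=0$ for every prime $p$, so $f*f\in K_n$ while $f\notin K_n$; your witness $\delta_{\pi_n,m}$ with its square $\delta_{\pi_n^2,m}$ works just as well and has the advantage of reusing the same function you introduced for strictness. Conversely, your proof is actually more complete than the paper's on the point of strictness: the paper never explicitly exhibits an element of $K_{n+1}\setminus K_n$, whereas your observation that $\delta_{\pi_n,m}\in K_{n+1}\setminus K_n$ is exactly what is needed to conclude that the chain does not stabilize and hence that $\mathcal{A}$ is not Noetherian.
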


\begin{proof}
Let $f$, $g \in K_n$. Then, for all $k \geqslant n$ we have that $f(\pi_k) = 0 = g(\pi_k)$ so that \[(f+g)(\pi_k) = f(\pi_k) + g(\pi_k) = 0,\] and we have $(f+g)(1) = f(1) + g(1) = 0$. Hence $f + g \in K_n$. Now, let $h\in \mathcal{A}.$ To show $h*f\in K_n,$ we see that for any $k \geqslant  n$,
\begin{align*}
(h*f)(\pi_k)&=\sum_{ij=\pi_k}h(i)f(j)= h(1)f(\pi_k) + h(\pi_k)f(1) = 0,\; \text{and}\\
(h*f)(1) &= h(1)f(1) = 0.
\end{align*}
In addition to this, we see that $K_n$ is not prime, as we can select $f = g = (0, 1, 1, 1, 1, \ldots)$ such that \[(f*g)(p) = f(1)g(p) + f(p)g(1) = 0,\] for all primes $p$ (including primes $p \geqslant \pi_n$) and thus $f*g \in K_n$, but $f$, $g \notin K_n$. 
\end{proof}

\begin{prop}\label{kdi}
The Krull dimension of $\mathcal{A}$ is infinite.
\end{prop}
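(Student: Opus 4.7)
The plan is to exhibit an explicit infinite strictly ascending chain of prime ideals of $\mathcal{A}$; since the Krull dimension is by definition the supremum of the lengths of chains of prime ideals, an unbounded family of such chains forces the dimension to be infinite.

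Let $p_1 < p_2 < p_3 < \cdots$ be the enumeration of the prime integers, and for each positive integer $k$ set $m_k := p_1 p_2 \cdots p_k$. By Proposition \ref{P_m} each $P_{m_k}$ is a prime ideal of $\mathcal{A}$. My target is then the infinite chain
\[
P_{m_1}\;\subsetneq\;P_{m_2}\;\subsetneq\;P_{m_3}\;\subsetneq\;\cdots
\]
of prime ideals in $\mathcal{A}$.

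For the inclusion $P_{m_k}\subseteq P_{m_{k+1}}$ I would argue straight from the definition of $P_m$: if $\gcd(m_{k+1},n)=1$, then in particular none of the primes $p_1,\dots,p_k$ divides $n$, so $\gcd(m_k,n)=1$ as well, and hence any $f\in P_{m_k}$ vanishes at such $n$. Strictness of each step follows cleanly from Corollary \ref{same prime ideal}, since the set of prime divisors of $m_k$ is a proper subset of that of $m_{k+1}$; alternatively one may exhibit the function $\delta_{p_{k+1},n}$ as an explicit witness, as it lies in $P_{m_{k+1}}$ (by Proposition \ref{P_m generators}) while it fails to lie in $P_{m_k}$ because $\gcd(m_k,p_{k+1})=1$ but $\delta_{p_{k+1},n}(p_{k+1})=1\neq 0$.

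There is no real obstacle: all of the heavy lifting (primeness of each $P_m$, the explicit generators via Proposition \ref{P_m generators}, and the discrimination between different $P_m$'s via their prime divisor sets) has already been done. The only content of the proof is the telescoping observation that, as $k$ grows, these ideals nest strictly inside one another, producing prime chains of arbitrarily large finite length and so forcing $\dim\mathcal{A}=\infty$.
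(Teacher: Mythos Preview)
Your argument is correct and is essentially the same as the paper's: both build the chain $P_{p_1}\subsetneq P_{p_1p_2}\subsetneq P_{p_1p_2p_3}\subsetneq\cdots$, invoking the primeness of each $P_m$ together with Corollary~\ref{same prime ideal} for strictness. The only cosmetic difference is that the paper cites Corollary~\ref{prime_subsets} for the inclusions and additionally notes the descending chain $J_{\{\pi_1\}}\supseteq J_{\{\pi_1,\pi_2\}}\supseteq\cdots$, which is not needed for the Krull dimension claim.
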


\begin{proof}
In addition to being able to construct an infinite ascending chain of non-prime ideals as in proposition \ref{nnd}, from proposition \ref{prime_subsets} and corollary \ref{same prime ideal} we see that we can construct an infinite ascending chain of prime ideals such that each one is a proper subset of the next, in particular \[P_{\pi_1} \subsetneq P_{\pi_1\pi_2} \subsetneq P_{\pi_1\pi_2\pi_3} \subsetneq \cdots,\] where $\pi_k$ refers to the $k^{th}$ prime in $\mathds{N}$. These ideals are properly contained in one another since if $m \geqslant 2$ and $q$ is a prime not dividing $m$, then we have $\delta_q \in P_{mq} \backslash P_m$. We can also construct an infinite descending chain of prime ideals, in particular \[J_{\{\pi_1\}} \supseteq J_{\{\pi_1, \pi_2\}}  \supseteq J_{\{\pi_1, \pi_2, \pi_3\}}  \supseteq \cdots.\]  
Thus, the Krull dimension of $\mathcal{A}$ is infinite.
\end{proof}

\begin{rem}
Note that the ring $\mathcal{A}$ is not Noetherian now follows from the above result. However, the independent proof of it in Proposition \ref{nnd} gives the construction of an infinite chain of ideals different from in Proposition \ref{kdi}.
\end{rem}

\begin{rem}
It is easy to see that if $f$ is invertible, then $\langle f \rangle$ is not a prime ideal. Therefore, the set of principal prime ideals is a (proper) subset of the set of non-invertible elements of $\mathcal{A}$. Using Corollary \ref{JQ_infinitely_generated}, we can note that the unique maximal ideal $\mathfrak{m}$ of $\mathcal{A}$ is not finitely generated since $J_{\emptyset} =\mathfrak{m}$.
\end{rem}







We finish this section with an example of a semi-prime ideal of $\mathcal{A}$ that is not in general a prime ideal.

\begin{prop} \label{P_mk}
Let $m, k \in \mathds{N}$ with $\mu(m) \neq 0$. Then \[P_{m, k} := \left\{f \in \mathcal{A} \mid f(n) = 0 \mbox{ if } \zeta(\gcd(m, n)) \leqslant k \right\}\] is a semi-prime ideal. 
\end{prop}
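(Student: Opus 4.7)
The plan is to carry out two steps: verify that $P_{m,k}$ is an ideal, and then show that $P_{m,k}$ equals its own radical. Set $S_k := \{n \in \mathds{N}^+ \mid \zeta(\gcd(m,n)) \leqslant k\}$, so that $P_{m,k}$ is exactly the set of $f \in \mathcal{A}$ vanishing on $S_k$. The preliminary I would record is the divisor-closure of $S_k$: if $j \mid n$, then every prime of $m$ dividing $j$ also divides $n$, so $\zeta(\gcd(m,j)) \leqslant \zeta(\gcd(m,n))$. In particular, $n \in S_k$ forces every divisor of $n$ into $S_k$ as well.

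The ideal verification is then routine. Closure under addition is pointwise. For closure under Dirichlet multiplication by an arbitrary $h \in \mathcal{A}$, I would take $f \in P_{m,k}$ and $n \in S_k$; every divisor $j$ of $n$ lies in $S_k$, so $f(j) = 0$ and hence $(h*f)(n) = \sum_{ij=n} h(i)f(j) = 0$.

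For semi-primeness I would argue the contrapositive: assuming $f \notin P_{m,k}$, I plan to show $f^s \notin P_{m,k}$ for every $s \geqslant 1$. Let $n_0$ be the smallest element of $S_k$ with $f(n_0) \neq 0$. The strengthening I need is that every divisor of $n_0^s$ again lies in $S_k$: any such divisor has its set of prime factors contained in that of $n_0$, so at most $\zeta(\gcd(m,n_0)) \leqslant k$ primes of $m$ can divide it. In particular $n_0^s \in S_k$, making it an admissible test point.

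Expanding
\[ f^s(n_0^s) = \sum_{i_1 \cdots i_s = n_0^s} f(i_1) \cdots f(i_s), \]
any tuple distinct from $(n_0, \ldots, n_0)$ must contain some $i_j < n_0$, since the geometric mean of the $i_j$ equals $n_0$. That $i_j$ divides $n_0^s$, hence lies in $S_k$, and then the minimality of $n_0$ forces $f(i_j) = 0$, killing the corresponding term. Only the diagonal contribution survives, yielding $f^s(n_0^s) = f(n_0)^s \neq 0$, which witnesses $f^s \notin P_{m,k}$. The main obstacle I anticipate is isolating and justifying the right closure statement for $S_k$ under divisors of powers of $n_0$; once this is in place, the remaining Dirichlet-convolution bookkeeping runs in close analogy with the norm calculation of Proposition \ref{np}.
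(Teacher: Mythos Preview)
Your argument is correct and shares the paper's core idea: test $f^s$ at $n_0^s$, where $n_0$ is the least element of $S_k$ on which $f$ does not vanish, and use divisor-closure of $S_k$ together with minimality of $n_0$ to kill all but the diagonal term.

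The execution differs slightly. The paper proceeds by induction on the exponent, writing $f^{r+1}=f^r*f$ and carrying the strengthened hypothesis that $n_0^r$ is the \emph{smallest} element of $S_k$ at which $f^r$ is nonzero; this forces a second calculation showing $f^{r+1}(N)=0$ for every $N<n_0^{r+1}$ in $S_k$. You instead expand the $s$-fold convolution directly and observe that any tuple $(i_1,\ldots,i_s)$ with product $n_0^s$ other than $(n_0,\ldots,n_0)$ must contain a factor strictly below $n_0$ (geometric-mean argument), which lies in $S_k$ and hence annihilates the term. This bypasses the induction and the auxiliary minimality claim, at the cost of handling an $s$-fold sum rather than a two-fold one. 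Both approaches rest on the same divisor-closure lemma for $S_k$; yours is a little more economical, while the paper's yields the extra information that $n_0^s$ is again the minimal witness for $f^s$.
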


\begin{proof}
Let $f$, $g \in P_{m,k}$. Then, for any $n \in \mathds{N}$ such that $\zeta(gcd(m,n)) \leqslant k$ we see that $(f+g)(n) = f(n) + g(n) = 0$ and hence, $f+g \in P_{m, k}$. If we have $h \in \mathcal{A}$, then we can note that \[(h*f)(n) = \sum_{\substack{ij=n}}h(i)f(j) = \sum_{\substack{ij=n \\ \zeta(\gcd(j, m)) \leqslant k}}h(i)f(j) = 0,\] and hence $f*g \in P_{m ,k}$. Thus, $P_{m, k}$ is an ideal of $\mathcal{A}$.
	
To show $P_{m, k}$ is a semi-prime, consider $f \in \mathcal{A} \backslash P_{m, k}$. We will show that $f^r \in \mathcal{A} \backslash P_{m, k}$ for all $r \in \mathds{N}\backslash\{0\}$. 
Since $f \notin P_{m, k}$, we see that there must exist a smallest $n \in \mathds{N}$ with $\zeta(gcd(m, n)) \leqslant k$ such that $f(n) \neq 0$. As an induction hypothesis, assume that for some $r\in \mathds{N}$, we have that $f^s \notin P_{m, k}$ for all $s\leqslant r$. Also, assume that for all $s\leqslant r$, we have that $n^s$ is the smallest positive integer with $\zeta(\gcd(m, n^s)) \leqslant k$ and $f^s(n^s) \neq 0$. We have already established that this is the case for $r=1$. We will now show that it must also be true for $r+1$.
	
We see that if $j\mid n$, then $\gcd(m, j) \mid \gcd(m, n)$ so that $\zeta(m, j) \leqslant \zeta(m, n)$. So, if $j\mid n$ and $j\neq n$, then $f(j)=0$. Since $m$ is square-free, we also have that $\gcd(m, n) = \gcd(m, n^{r+1})$ and if $i \mid n^{r+1}$, then \[\gcd(m,i) \mid \gcd(m, n^{r+1}) = \gcd(m, n)\] so that $\zeta(gcd(m, i)) \leqslant \zeta(\gcd(m, n)) \leqslant k$. Hence, if $i\mid n^{r+1}$ and $i < n^r$, then $f^r(i)=0$. With this, we see that
\begin{align*}
(f^{r+1})(n^{r+1}) &= \sum_{ij=n^{r+1}}f^r(i)f(j) \\ 
&= \sum_{\substack{ij=n^{r+1}\\ i \leqslant n^r}}f^r(i)f(j) + \sum_{\substack{ij=n^{r+1}\\ i > n^r}}f^r(i)f(j)\\
&=\sum_{\substack{ij=n^{r+1}\\ i \leqslant n^r}}f^r(i)f(j) + \sum_{\substack{ij=n^{r+1}\\j<n}}f^r(i)f(j)\\
&=f^r(n^r)f(n) + \sum_{\substack{ij=n^{r+1}\\ i < n^r \\ \zeta(gcd(m, i)) \leqslant k}}f^r(i)f(j) + \sum_{\substack{ij=n^{r+1}\\ j<n \\ \zeta(gcd(m, j)) \leqslant k}}f^r(i)f(j)\\
&= f^r(n^r)f(n)\\
&\neq 0.
\end{align*}
Hence, $f^{r+1} \notin P_{m, k}$. 
Now consider $N \in \mathds{N}$ such that $N < n^{r+1}$ and $\zeta(gcd(m, N)) \leqslant k$. We will show that $f^{r+1}(N) = 0$ so that $n^{r+1}$ is the smallest positive integer for which $\zeta(gcd(m, n^{r+1})) \leqslant k$ and $f^{r+1}(n^{r+1}) \neq 0$. Indeed:	
\begin{align*}(f^{r+1})(N) &= \sum_{ij=N}f^r(i)f(j) \\ &= \sum_{\substack{ij=N\\ i \leqslant n^r}}f^r(i)f(j) + \sum_{\substack{ij=N\\ i > n^r}}f^r(i)f(j)\\
&=\sum_{\substack{ij=N\\ i \leqslant n^r}}f^r(i)f(j) + \sum_{\substack{ij=N\\j<\frac{N}{n^r}}}f^r(i)f(j)\\
&= \sum_{\substack{ij=N\\ i \leqslant n^r \\ \zeta(gcd(m, i)) \leq k}}f^r(i)f(j) + \sum_{\substack{ij=N\\ j<\frac{N}{n^r}<n \\ \zeta(gcd(m, j)) \leqslant k}}f^r(i)f(j)\\
&= 0.
\end{align*}
Hence $(f^{r+1})(N) = 0$ for all $N \in \mathds{N}$ satisfying $\zeta(\gcd(m, N)) \leqslant k$ and $N<n^{r+1}$. 
This proves our induction hypothesis, and as a result, $f \notin P_{m, k}$ implies that $f^r \notin P_{m, k}$ for all $r\in \mathds{N}$. Hence, $P_{m, k}$ is a semi-prime ideal.
\end{proof}

\begin{cor}
Let $m, k \in \mathds{N}$ with $\mu(m)\neq0$ and $1 \leqslant k < \zeta(m)$. Then $P_{m, k}$ is not a prime ideal.
\end{cor}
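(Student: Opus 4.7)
The plan is to exhibit explicit functions $f, g \in \mathcal{A}$ with $f, g \notin P_{m, k}$ but $f * g \in P_{m, k}$. The hypothesis $\mu(m) \neq 0$ tells us that $m$ is squarefree, so we may write $m = q_1 q_2 \cdots q_s$ with $s = \zeta(m)$ and the $q_i$ distinct primes. The hypothesis $1 \leqslant k < \zeta(m)$ gives us $s \geqslant k + 1$, so we have at least $k+1$ distinct primes available.

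The natural candidates are delta-style functions supported at products of these primes. Specifically, set $a := q_1 q_2 \cdots q_k$ and $b := q_{k+1}$, and take $f := \delta_{a, n}$ and $g := \delta_{b, n}$ (i.e., the arithmetic functions which are $1$ at $n=a$ and $n=b$ respectively, and $0$ elsewhere). First I would verify that $f \notin P_{m,k}$ by noting $f(a) = 1 \neq 0$ while $\gcd(m, a) = a$ has $\zeta(a) = k \leqslant k$; symmetrically $g \notin P_{m,k}$ since $\zeta(\gcd(m,b)) = 1 \leqslant k$.

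Next I would compute the convolution: the sum $(f*g)(n) = \sum_{ij = n} f(i) g(j)$ picks out exactly the single term $i = a, j = b$, so $f * g = \delta_{ab, n}$. Since $a$ and $b$ are coprime and both divide $m$, we have $ab \mid m$ and $\gcd(m, ab) = ab$, hence $\zeta(\gcd(m, ab)) = \zeta(ab) = k + 1 > k$. Thus the only $n$ at which $f*g$ is nonzero is $n = ab$, and for this $n$ the condition $\zeta(\gcd(m,n)) \leqslant k$ fails; for every other $n$, $(f*g)(n) = 0$ trivially. Therefore $f * g \in P_{m, k}$, which together with $f, g \notin P_{m, k}$ shows $P_{m, k}$ is not prime.

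There is essentially no obstacle here once the hypotheses are unpacked — the real content was already established in Proposition \ref{P_mk} (semi-primeness), and this corollary simply exploits the slack between "at most $k$" and "all of $\zeta(m)$" prime divisors. The only subtlety worth flagging in the write-up is making sure the indices $k$ and $k+1$ lie in range, which is exactly guaranteed by $1 \leqslant k < \zeta(m)$, and that squarefreeness of $m$ is used (via $\mu(m) \neq 0$) to ensure that $ab \mid m$ and that $\zeta(ab) = \zeta(a) + \zeta(b)$ without collision.
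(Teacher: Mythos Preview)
Your proof is correct and essentially identical to the paper's: both write $m=q_1\cdots q_s$, take $\delta_{q_1\cdots q_k,n}$ and $\delta_{q_{k+1},n}$ as the two witnesses, and observe that their convolution is $\delta_{q_1\cdots q_{k+1},n}\in P_{m,k}$ while neither factor lies in $P_{m,k}$. You supply a bit more verification detail than the paper does, but the argument is the same.
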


\begin{proof}
Since $\mu(m) \neq 0$, $m$ is square-free and its prime factorisation is of the form $q_1q_2\cdots q_t$ with $t = \zeta(m)$. Let $\alpha = q_1q_2\cdots q_k$ and $\beta = q_{k+1}$ and then define $a = \delta_{\alpha}$ and $b = \delta_{\beta}$. Then $ab = \delta_{\alpha\beta}$ so that $ab \in P_{m, k}$ but $a \notin P_{m, k}$ and $b \notin P_{m, k}$. Hence, $P_{m, k}$ is not a prime ideal.
\end{proof}

\begin{rem}
We see that when $k = 0$ we have that $P_{m, 0} = P_m$, which is a prime ideal. If $k \geqslant \zeta(m)$, then $\zeta(gcd(m, n)) \leqslant k$ is true for all $n \in \mathds{N}$ so that we obtain $P_{m, k} = \{0\}$.
\end{rem}

\section*{Acknowledgement}

We express our sincere gratitude to the anonymous referee for their insightful suggestions and constructive comments, which have substantially enhanced the quality and clarity of this paper.

\end{document}